\DeclareMathOperator*{\Argmin}{Argmin}
\newcommand{\Parens}[1]{\left( #1 \right)}
\newcommand{\Bracket}[1]{\left[ #1 \right]}
\newcommand{\Set}[1]{\left\{ #1 \right\}}
\newcommand{\Nmeas}{N_{\mathrm{mea}}}
\newcommand{\Nphy}{N_{\mathrm{phy}}}
\newcommand{\R}{\mathbb{R}}
\newcommand{\N}{\mathbb{N}}
\renewcommand{\L}{\mathfrak{L}}
\renewcommand{\H}{\mathcal{H}}
\renewcommand{\N}{\mathcal{N}}
\newtheorem{de}{Definition}
\newtheorem{theo}{Theorem}
\newtheorem{proposition}{Proposition}
\newtheorem{remarkTheo}{Remark}
\newenvironment{remark}[1][]{\begin{remarkTheo}}{\hfill$\square$\end{remarkTheo}}
\title{\LARGE \bf Physics-informed Learning for Identification and State Reconstruction of Traffic Density} 
\author{Matthieu Barreau$^{1}$, Miguel Aguiar$^{1}$, John Liu$^{1}$ and Karl Henrik Johansson${}^1$
	\thanks{$^{1}$ Division of Decision and Control Systems and Digital Futures, KTH Royal Institute of Technology, Stockholm, Sweden (e-mail: {\tt \{barreau,aguiar,johnliu,kallej\}@kth.se}).}%
	\thanks{This research is partially funded by the KAUST Office of Sponsored Research under Award No. OSR-2019-CRG8-4033, the Swedish Foundation for Strategic Research and Knut and Alice Wallenberg Foundation. 
	}
}
\begin{document}

\maketitle
\thispagestyle{empty}
\pagestyle{empty}

\begin{abstract}
    We consider the problem of traffic density reconstruction using measurements from probe vehicles (PVs) with a low penetration rate. In other words, the number of sensors is small compared to the number of vehicles on the road. The model used assumes noisy measurements and a partially unknown first-order model. All these considerations make the use of machine learning to reconstruct the state the only applicable solution. We first investigate how the identification and reconstruction processes can be merged and how a sparse dataset can still enable a good identification. Secondly, we propose a pre-training procedure that aids the hyperparameter tuning, preventing the gradient descent algorithm from getting stuck at saddle points. Examples using numerical simulations and the SUMO traffic simulator show that the reconstructions are close to the real density in all cases.
\end{abstract}

\section{Introduction}

Traffic state control has recently attracted a lot of attention~\cite{ferrara2018}. The possibility, in a near future, of using automated vehicles within the flow of vehicles opened many new control and observation perspectives. 

There are two classical methods for traffic state reconstruction. The most used one relies on a model, and is therefore labeled model-based. The survey~\cite{seo2017traffic} gives a good overview of many different modern techniques. The use of probe vehicles in this context is however quite recent and the reader can refer to~\cite{delle2019traffic,barreau2020dynamic,cicic2020numerical}. Nevertheless, for more complex models or robustness issues, it is today almost impossible to use macroscopic models.
This is in part due to the fact that it is very difficult, mathematically speaking, to derive convergence properties for infinite-dimensional nonlinear systems.

Another approach that is widely used today is called data-driven~\cite{berberich2020robust}. Such a methodology uses measurement data to derive system properties or predict the near future. This has been used in traffic state reconstruction in~\cite{herrera2010incorporation} for instance. The approach is quite powerful since it does not require many assumptions and the generality is quite high. However, there are not many practical applications since it requires many in-domain measurements. Using vehicles as sensors (probe vehicles) requires a large penetration rate, meaning that most of the vehicles are capable of probing.

To get the advantages of both methods while avoiding the aforementioned difficulties, it is of interest to develop a data-model driven methodology. This has recently been done in~\cite{raissi2019physics, sirignano2018dgm} using the notion of physics-informed learning. This data-based technique enforces a physical model on the measurements such that the generalization error is kept small even when the measurements are few and sparse. 

This approach has had a very large impact on the scientific community. The problem of traffic state reconstruction has recently been investigated using this methodology in~\cite{huang2020physics} and further in~\cite{shi2021physics,thodi2021incorporating}. The use of probe vehicles in particular is studied in~\cite{barreau2020learningbased}. The application to traffic flow reconstruction with external simulators is considered in~\cite{liu2020learning}.

The objective of this paper requires some definitions before we are able to state it precisely. However, informally speaking, the aim is to derive an algorithm capable of estimating the density of cars from measurements taken by a subset of the cars on the road.
The main contribution is the development of a machine learning framework for joint identification and state estimation of traffic flow from sparse measurements taken by probe vehicles.
In comparison with our previous papers~\cite{barreau2020learningbased,liu2020learning}, which focused on state estimation, here we consider in addition the problem of identifying the velocity function.
Such a difference implies an explosion of the number of variables and physical costs. The second contribution lies in the enhanced training procedure so that it can deal with many constraints simultaneously.

The paper is organized as follows. In Section~2, we introduce two traffic models. Section~3 is dedicated to the mathematical formulation of the objective in terms of an optimization problem and introduces a relaxed version of the problem. Section~4 proposes a learning solution focusing on the training procedure. Section~5 discusses the results obtained in different settings. Finally, Section~6 concludes with some perspectives on future research directions. 

\textbf{Notation:} We define $L_{\mathrm{loc}}^1(\mathcal{S}_1, \mathcal{S}_2)$, $L^{\infty}(\mathcal{S}_1, \mathcal{S}_2)$ and $C^k(\mathcal{S}_1, \mathcal{S}_2)$ as the spaces of locally integrable functions, bounded functions and continuous functions of class $k$ from $\mathcal{S}_1$ to $\mathcal{S}_2$, respectively. 
For a differentiable function $f$ of a single variable, $f'$ refers to its derivative, while partial derivatives of multivariable functions are indicated by subscripts.

\section{Coupled Macro-Micro Model of Traffic Flow}

There are two main kinds of traffic models in the literature, namely macroscopic and microscopic models~\cite{ferrara2018}. These two model types are briefly discussed in this first section to explain the benefit of using a coupled micro-macro model of traffic flow.

\subsection{Microscopic model}

Assume here that there are $N > 1$ vehicles located at position $y_i \in \R$ for $i \in \{ 1, \dots, N\}$. First order microscopic models assume that the following dynamic equation holds:
\[
    \left\{ \begin{array}{ll}
        y_i'(t) = V{\left( y_{i+1}(t) - y_i(t) \right)} & \text{ if } i \in \{1, \dots, N-1\}, \\
        y_N'(t) = V_{\mathrm{lead}}(t), \\
        y_1(0) < \dots < y_N(0).
    \end{array}
    \right.
\]
This is a follow-the-leader dynamical system when ${V_{\mathrm{lead}} \geq 0}$.
Usually, the velocity function $V$ depends on the intra-vehicular space and is decreasing, bounded and positive.

Such models perform badly in practice since they are too simple.
Second-order models have been introduced to improve the correlation with observations~\cite{ferrara2018}.
A drawback is that the system dimension becomes very large when dealing with many vehicles, so that an infinite-dimensional model (referred to as a macroscopic model) is introduced.

\subsection{Macroscopic model}

\subsubsection{Notion of density}

The \textit{normalized density of vehicles} $\rho: \R_{\geq 0} \times \R \to [0, 1]$ is defined such that
\[
    N_{\mathrm{veh}}(x_1, x_2, t) \propto \int_{x_1}^{x_2} \rho(t, x) dx,
\]
where $N_{\mathrm{veh}}(x_1, x_2, t)$ is the number of vehicles at a given time $t$ on the road segment $[x_1, x_2]$.
As shown in \cite{barreau2020learningbased,lighthill1955kinematic}, the normalized density is a solution of the equation
\begin{equation} \label{eq:hclaw}
    \rho_t(t, x) + f{\Parens{\rho(t, x)}}_x = 0
\end{equation}
with an initial condition $\rho(0, \cdot) = \rho_0$, and $f: [0, 1] \to \R_{\geq 0}$ is a smooth concave function which is related to $V$.

Equation~\eqref{eq:hclaw} corresponds to the law of conservation of mass of some substance with density $\rho$ and $f(\rho)$
is the rate at which the substance is passing through a point with density $\rho$.
Then we can also define the substance velocity ${v: [0, 1] \to \R_{\geq 0}}$ through the relation 
\begin{equation} \label{eq:rhoV}
    \rho v(\rho) = f(\rho).
\end{equation}
In traffic applications, the `substance' in question refers to the vehicles on a road, as from a macroscopic point of view one assumes that the flow of vehicles can be approximated by the flow of a continuous substance.
Then $f(\rho)$ gives the amount of vehicles flowing through some point of space per unit time, and $v(\rho)$ is the mean vehicle velocity.


\subsubsection{Existence and uniqueness}

It is well known that there may be no smooth solution of~\eqref{eq:hclaw}, even if $\rho_0$ is smooth.
Hence we consider solutions in a weak sense, and one then has the following existence and stability result~\cite{barreau2020learningbased}:
\begin{theo} \label{theo:existence}
    If $\rho_0 \in L^\infty{\Parens{\R, [0, 1]}}$ and $f \in C^2\left( [0, 1], \R_{\geq 0} \right)$, then there exist weak solutions $\rho$ of~\eqref{eq:hclaw} with regularity $\rho \in C^0{\Parens{\R_{\geq 0}, L^1_{\mathrm{loc}}{\Parens{\R, [0, 1]}} }}$.
\end{theo}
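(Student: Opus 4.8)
The plan is to obtain $\rho$ as a limit of vanishing-viscosity approximations and to use an $L^1$-contraction estimate to pass from regular to merely bounded initial data. First I would reduce to the case where $\rho_0$ additionally has locally bounded variation: given $\rho_0 \in L^\infty(\R, [0,1])$, choose $\rho_0^n \in \mathrm{BV}_{\mathrm{loc}}(\R, [0,1])$ with $\rho_0^n \to \rho_0$ in $L^1_{\mathrm{loc}}$ (mollify, then truncate the range back into $[0,1]$). It then suffices to construct, for each such $\rho_0^n$, a weak solution $\rho^n \in C^0(\R_{\geq 0}, L^1_{\mathrm{loc}}(\R, [0,1]))$ and to show that the solution map $\rho_0^n \mapsto \rho^n$ is non-expansive on $L^1_{\mathrm{loc}}$, uniformly on compact time intervals; the sequence $(\rho^n)$ is then Cauchy in $C^0([0,T], L^1_{\mathrm{loc}})$ for each $T$, and its limit $\rho$ inherits the claimed regularity. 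The pointwise constraint $\rho \in [0,1]$ survives because a subsequence converges a.e., and the weak formulation is stable under $L^1_{\mathrm{loc}}$ convergence.

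For $\mathrm{BV}_{\mathrm{loc}}$ initial data I would regularize. Extend $f$ to a $C^2$ function on $\R$ and, for $\epsilon > 0$, solve the uniformly parabolic problem $\rho^\epsilon_t + f(\rho^\epsilon)_x = \epsilon\,\rho^\epsilon_{xx}$ with a smoothed initial datum; standard parabolic theory gives a smooth solution. The maximum principle, together with the fact that the constants $0$ and $1$ are sub/super-solutions of the (extended) equation, keeps $\rho^\epsilon$ with values in $[0,1]$, providing a uniform $L^\infty$ bound. Differentiating the equation in $x$ and applying the maximum principle to $\rho^\epsilon_x$ yields a total-variation bound uniform in $\epsilon$ (the approximations are total-variation diminishing), and the equation itself then converts this spatial bound into an $L^1$-in-time modulus of continuity of $t \mapsto \rho^\epsilon(t,\cdot)$. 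Helly's selection theorem in $x$ combined with an equicontinuity argument in $t$ extracts a subsequence converging in $C^0([0,T], L^1_{\mathrm{loc}})$ to some $\rho$; passing to the limit in the weak formulation, the viscous contribution vanishes because $\epsilon\,\|\rho^\epsilon_x\|$ stays bounded, so $\rho$ is a weak solution of~\eqref{eq:hclaw} with the desired range and regularity. (For general $L^\infty$ data one could alternatively skip the $\mathrm{BV}$ reduction and invoke compensated compactness to obtain strong $L^1_{\mathrm{loc}}$ convergence of $\rho^\epsilon$, but the route above keeps the time-continuity statement transparent.)

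It remains to establish the contraction estimate used in the first step. Working with the entropy inequalities satisfied by the viscous approximations and applying Kru\v{z}kov's doubling-of-variables technique, one gets, for solutions with data $\rho_0, \tilde\rho_0$ and any $R>0$, $T>0$,
\[
    \int_{|x| \le R} \left| \rho(T,x) - \tilde\rho(T,x) \right| dx \;\le\; \int_{|x| \le R + LT} \left| \rho_0(x) - \tilde\rho_0(x) \right| dx,
\]
with $L = \sup_{[0,1]} |f'|$, which is exactly the non-expansiveness (plus finite speed of propagation, needed to make the local statement meaningful) required above and also gives local $L^1$ continuity in time. I expect the main obstacle to be the uniform total-variation bound on $\rho^\epsilon$ and, coupled with it, selecting a limit that is genuinely continuous in time into $L^1_{\mathrm{loc}}$ rather than merely bounded measurable; the contraction estimate is the tool that upgrades the construction from $\mathrm{BV}$ data to arbitrary $L^\infty$ data.
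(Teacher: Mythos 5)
Your proposal is correct and follows essentially the same route the paper relies on: the paper does not prove Theorem~\ref{theo:existence} itself but invokes the classical vanishing-viscosity theory (equation~\eqref{eq:hclaw-visc}, the limit $\gamma \downarrow 0$ in $L^1_{\mathrm{loc}}$, and the Kru\v{z}kov $L^1$-contraction framework) via its citations, which is exactly the construction you carry out. The only small imprecision is that the uniform total-variation bound for the viscous approximations is more naturally obtained from the $L^1$-contraction of the parabolic solution operator combined with translation invariance rather than from a maximum principle applied directly to $\rho^\epsilon_x$, but this does not affect the validity of the argument.
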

Uniqueness is ensured by restricting the set of weak solutions to those satisfying the Lax-E condition~\cite[Chap.~14]{Dafermos:1315649}.
Alternatively, one may consider the unique smooth solution of the related equation
\begin{equation} \label{eq:hclaw-visc}
    {\bar \rho}_t(t, x) + f{\Parens{\bar \rho(t, x)}}_x = \gamma^2 \bar \rho_{xx}(t, x)
\end{equation}
for small $\gamma > 0$.
If ${\bar\rho}_{\gamma}$ is the solution of~\eqref{eq:hclaw-visc}, then the limit ${\rho := \lim_{\gamma \downarrow 0} {\bar \rho}_\gamma}$ exists in $L^1_{\mathrm{loc}}$~\cite[p.~157]{Dafermos:1315649} and is the unique entropic solution of~\eqref{eq:hclaw}.

\subsubsection{Examples of flux functions}

The two most well-known flux functions used in traffic applications are perhaps the Greenshields~\cite{greenshields1935study} and the Newell-Daganzo~\cite{NEWELL1993281,DAGANZO1994269} flux functions.
The Greenshields flux function $f_{G}$ is given by 
\begin{equation} \label{eq:greenshields}
    f_G(\rho) = \rho v_G(\rho), \quad v_G(\rho) = V_f\Parens{1 - \rho}.
\end{equation}
At zero traffic density the vehicles move at the free flow velocity $V_f > 0$, and the velocity $v_G$ decreases linearly with density, going to zero as the road becomes more congested.

The Newell-Daganzo flux is function is given by 
\begin{equation} \label{eq:newell-daganzo}
    f_{\mathrm{ND}}(\rho) = \min{ \Set{V_f \rho, W{\Parens{1 - \rho}}} }.
\end{equation}
The velocity function in this case is then
\begin{equation*}
    v_{\mathrm{ND}}(\rho) = \begin{cases}
        V_f, & \rho < \sigma, \\
        W\frac{1 - \rho}{\rho}, & \rho \geq \sigma,
    \end{cases}
\end{equation*}
where $\sigma \in [0, 1]$ is such that $V_f \sigma = W{\Parens{1 - \sigma}}$.
Hence vehicles move at a constant average velocity up to a critical density $\sigma$ where the road becomes congested, and the velocity goes to zero as $\rho$ approaches one.

\begin{remark}
    The flux function in this case is not differentiable everywhere,
    but one can consider a smooth approximation of $f_{\mathrm{ND}}$ such that Theorem~\ref{theo:existence} applies.
\end{remark}

\subsection{Relation between the models}

The two models introduced so far are mathematically related.
The Greenshields model is a limit case of the first order follow-the-leader dynamics \cite{colombo2014} and the integral of the normalized density is proportional to the number of vehicles.
Consequently, $\rho$ is directly related to the intra-vehicular space and one can use the velocity function $v$ of the macroscopic scheme as the vehicle velocity $V$ \cite{liu2020learning}.
This leads to the cascaded system
\begin{equation} \label{eq:coupled}
    \left\{ \begin{array}{l}
        \rho_t(t, x) + f{\Parens{\rho(t, x)}}_x = \gamma^2\rho_{xx}(t, x), \\
        y_i'(t) = v{\Parens{ \rho(t, y_i(t))}}, \ i \in \{1, \dots, N\},
    \end{array} \right.
\end{equation}
for $t \geq 0$ and $x \in \R$, with appropriate initial conditions. In \cite{barreau2020learningbased} the authors give conditions for \eqref{eq:coupled} to be mathematically well-posed.

\begin{de} \label{def:consistency}
    For the two models to be self-consistent, one must ensure the following:
    \begin{enumerate}
        \item $f$ must be positive, concave and $C^2$;
        \item $v$ must be positive and decreasing;
        \item $v$ must be always larger than or equal to $f'$.
    \end{enumerate}
\end{de}

\begin{remark}
    The last item has the following physical motivation. If we consider that the flow is a wave, $f'$ is related to the group velocity (the speed of a characteristic) and $v$ to the phase velocity. Requiring $f' \leq v$ means that a particle must move faster than the envelop of the wave.
\end{remark}

The following proposition gives a simple sufficient condition for consistency of the models.
\begin{proposition} \label{prop:consistency}
    If $f$ is concave and $v \in C^2([0, 1], \R_{\geq 0})$, then equation~\eqref{eq:coupled} is a consistent model for describing micro- and macroscopic behaviors of traffic flow.
\end{proposition}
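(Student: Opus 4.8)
The plan is to verify, one at a time, the three requirements of Definition~\ref{def:consistency}, using throughout the structural identity $f(\rho) = \rho v(\rho)$ from~\eqref{eq:rhoV} to translate between properties of $f$ and properties of $v$.

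First I would dispatch item~1. Since $v \in C^2([0,1],\R_{\geq 0})$ and $\rho \mapsto \rho$ is smooth, the product $f(\rho) = \rho v(\rho)$ is $C^2$ on $[0,1]$; since $\rho \geq 0$ and $v(\rho) \geq 0$ there, $f$ is nonnegative; and concavity of $f$ is assumed. So item~1 holds with essentially no work.

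The core of the argument is item~2, namely that $v$ is decreasing. Differentiating $f(\rho) = \rho v(\rho)$ gives $f'(\rho) = v(\rho) + \rho v'(\rho)$ and $f''(\rho) = 2v'(\rho) + \rho v''(\rho)$. The key observation is the identity $\frac{d}{d\rho}\bigl(\rho^2 v'(\rho)\bigr) = 2\rho v'(\rho) + \rho^2 v''(\rho) = \rho\, f''(\rho)$. Since $f$ is concave we have $f'' \leq 0$ on $[0,1]$, hence $\rho f''(\rho) \leq 0$ there, so $\rho \mapsto \rho^2 v'(\rho)$ is non-increasing on $[0,1]$; as it vanishes at $\rho = 0$, it stays $\leq 0$, which forces $v'(\rho) \leq 0$ for $\rho \in (0,1]$ (and, by continuity, at $\rho=0$). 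Thus $v$ is decreasing, and together with $v \geq 0$ this yields item~2 — positivity holding in the weak sense $v \geq 0$, or strictly on $[0,1)$ whenever $v(1) > 0$.

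Item~3 then follows immediately: $v(\rho) - f'(\rho) = v(\rho) - \bigl(v(\rho) + \rho v'(\rho)\bigr) = -\rho v'(\rho) \geq 0$, using $\rho \geq 0$ and $v' \leq 0$ from the previous step, so $v \geq f'$ on $[0,1]$. The only genuinely non-routine point is spotting the identity $\frac{d}{d\rho}\bigl(\rho^2 v'(\rho)\bigr) = \rho f''(\rho)$, which is exactly what converts concavity of $f$ into monotonicity of $v$; everything else is bookkeeping with the product rule and the sign constraint $v \geq 0$.
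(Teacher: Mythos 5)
Your proof is correct, and it reaches the same two sign facts the paper relies on ($v' \leq 0$, and $f' - v = \rho v'$ via the product rule), but the central step is argued by a different mechanism. The paper first notes $f(0)=0$ and $f \geq 0$ from $f=\rho v$, and then gets the monotonicity of $v$ geometrically: since $v(\rho) = f(\rho)/\rho$ is the slope of the chord of the concave function $f$ from the origin, the Chordal Slope Lemma immediately gives that $v$ is decreasing — an argument that needs only concavity of $f$, not its second derivative. You instead convert concavity into the pointwise inequality $f'' \leq 0$ (legitimate here, since $v \in C^2$ makes $f \in C^2$) and exploit the identity $\frac{d}{d\rho}\bigl(\rho^2 v'(\rho)\bigr) = \rho f''(\rho)$ together with the vanishing of $\rho^2 v'$ at $\rho = 0$ to force $v' \leq 0$, with a continuity remark at the endpoint. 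Both arguments are sound; the paper's is shorter and more robust (it would survive with $f$ merely concave and $v$ defined by the chord slope), while yours is self-contained calculus that avoids invoking a named lemma and makes the quantitative link between $f''$ and $v'$ explicit. Your handling of items~1 and~3, including the observation that $v - f' = -\rho v' \geq 0$, coincides with the paper's.
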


\begin{proof} 
    From the assumptions and using \eqref{eq:rhoV}, we get that $f(0) = 0$ and $f$ is positive. By concavity of $f$, the Chordal Slope Lemma implies that $v$ is decreasing. Since $f'(\rho) = v(\rho) + \rho v'(\rho)$, we get that $f' - v$ has the same sign as $v'$ which is negative. Consequently, Definition~\ref{def:consistency} indeed holds and the model is consistent.
\end{proof}

For the sequel, we need to express $f'$ and $f''$ as functions of $v$. Using \eqref{eq:rhoV}, we get: 
\begin{equation} \label{eq:fFv}
            f'(\rho) = v(\rho) + \rho v'(\rho), \quad
            f''(\rho) = 2 v'(\rho) + \rho v''(\rho).
\end{equation}
Combining \eqref{eq:coupled}, \eqref{eq:fFv}, Proposition~\ref{prop:consistency} and Definition~\ref{def:consistency} leads to the following for $i \in \{1, \dots, N\}$:
\begin{equation} \label{eq:coupled2}
    \hspace*{-0.3cm}
    \left\{ \begin{array}{l}
        \mathcal{N}_y[y_i] := y_i' - v \left( \rho(\cdot, y_i) \right) = 0, \\
        \mathcal{N}_{\rho}[\rho, v] := \rho_t + \left(v(\rho) + \rho v'(\rho) \right)\rho_x - \gamma^2 \rho_{xx} = 0, \\
        \mathcal{N}_v[v] := 2 v' + \rho v'' \leq 0, \\
        v \geq 0 \text{ and } C^2.
    \end{array} \right.
\end{equation}

\begin{remark}
    Note that with the Greenshields or the regularized Newell-Daganzo speed function, the two last statements of \eqref{eq:coupled2} are indeed verified, resulting in a consistent model.
\end{remark} 

\subsection{Measurements}

In this paper $N$ probe vehicles located at $y_1(t), \dots, y_N(t)$ are used as mobile sensors within the flow of vehicles, with dynamics given by the first equation in \eqref{eq:coupled}.
These vehicles can measure the following values in real-time:
\begin{enumerate}
    \item Their positions $y_i(t)$;
    \item The local density at their locations: $\rho_i(t) = \rho(t, y_i(t))$;
    \item And their instantaneous speed $v_i(t) = y_i'(t)$;
\end{enumerate}
Requiring position and speed measurements is reasonable since these can be obtained using GPS and/or inertial sensors.
The density measurements are more delicate.
In a single-lane environment, one can measure the inter-vehicular distances and deduce an approximation of the local density. 
In a multi-lane context, a camera or radar sensors might be needed~\cite{chellappa2004vehicle}. 

\begin{remark}
    Compared to the authors' previous work \cite{liu2020learning}, we have added the instantaneous speed measurements.
    These are however not essential but preferable since we do not make any assumptions on the velocity-density relation.
\end{remark}

Based on equation~\eqref{eq:coupled2}, one can investigate the state-reconstruction problem.

\section{Problem Statement}

The general definition of partial-state reconstruction is proposed in \cite{barreau2020learningbased}.
Here, we adapt this definition to the case of density reconstruction.

\begin{de} \label{def:stateReconstruction}
     Let $\Omega = [0, T]$, $T > 0$. A \textbf{density reconstruction} $\hat{\rho}$ in $\H_c \subseteq H^1(\Omega, H^2(\R, [0, 1]))$ is defined by
     \begin{equation} \label{eq:stateReconstruction}
         \hat{\rho} \ \in \ \Argmin_{\bar{\rho} \ \in \ \H_c} \sum_{i=1}^N \int_0^T  \left| \rho_i(t) - \bar{\rho}(t, y_i(t)) \right|^2 \, dt.
     \end{equation}
\end{de}

The overall quality of the reconstruction is measured by the generalization error (GE):
\begin{equation} \label{eq:GE}
    \text{GE}_T(\hat{\rho}) = \int_{0}^T \int_{y_1(t)}^{y_N(t)} \left| \rho(t, x) - \hat{\rho}(t, x) \right|^2 \, dx \, dt.
\end{equation}

\textbf{Problem statement:} The objective of this paper is to propose an efficient algorithm to compute a density reconstruction with low GE using noisy measurements from PVs.

To obtain a small GE, there are two options:
\begin{enumerate}
    \item There are many probe vehicles so that $\text{GE}_T(\hat{\rho}) \simeq \frac{1}{N} \sum_{i=1}^N \int_0^T  |\rho_i(t) - \hat{\rho}(t, y_i(t)) |^2 \, dt$ and a density reconstruction will lead to a small error;
    \item The set $\H_c$ is defined such that the minimizers have low GE.
\end{enumerate}

Of course, we assume that the penetration rate is low, meaning that there are few probe vehicles compared to the number of vehicles.
Hence the generalization error in the first case will be high if $\H_c = H^1(\Omega, H^2(\R, \R))$.

In this paper we are interested in the second case and the set $\H_c$ is constructed using the fact that \eqref{eq:coupled2} must hold. In other words, we define
\[
    \begin{array}{rl}
        \!\!\!\H_c =\!\!\!\!& \displaystyle \left\{ \bar{\rho} \in H^1(\Omega, H^2(\R, \R)) \ | \vphantom{\int_0^T}  \int_0^T \!\!\! \int_{y_1}^{y_N} \N_{\rho}[\bar{\rho},v]^2 = 0 \text{ for } \right.\\
        & \displaystyle \quad \quad v \in \Argmin_{\bar{v} \in C^2([0,1], \R_{\geq 0})} \sum_{i=1}^N \int_0^T |v_i(t) - \bar{v}(\rho_i(t))|^2 dt \\
        & \left.\displaystyle \quad \quad \quad \quad \text{ s. t. } \int_0^1 \max \left( 0, \N_v[\bar{v}]\Big\rvert_{\rho = u} \right)^2 \, du = 0 \right\}.
    \end{array}
\]

\begin{remark} \label{rem:observability}
    It has been proven in~\cite{delle2019traffic} (with $\gamma = 0$) that the conditions of Definition~\eqref{def:consistency} imply perfect reconstruction for finite $T > 0$. For any density reconstruction $\hat{\rho} \in \H_c$, we get $\text{GE}_t(\hat{\rho}) = \text{GE}_T(\hat{\rho})$ for all $t \geq T$.
\end{remark}

In the following section we derive a relaxed version of~\eqref{eq:stateReconstruction} and discuss how to locally solve it.

\section{Learning-based Density Reconstruction}
\label{sec:learning}

There are several ways of solving constrained optimization problems.
One is to define the optimized variables in such a way that the constraints are already satisfied as done in \cite{lagaris1998artificial} where the boundary conditions are enforced in the solution.
However, this technique does not work well in the presence of noise or for complex systems since the boundary condition cannot be enforced.
We will explore here another technique which consists of penalizing unfeasible solutions using an extended Lagrangian cost function.

\begin{remark}
    We assume here that the measurements over the interval~$[0, T]$ are given as a finite dataset consisting of $\Nmeas$ values at the sampling instants $\Set{t_k}_{k=1}^{\Nmeas}$.
\end{remark}

The Lagrange multiplier method relaxes the optimization problem \eqref{eq:stateReconstruction} to:
\begin{equation} \label{eq:langrangeMultiplier}
    \hat{\rho} \in \Argmin_{\bar{\rho} \ \in \ H^1(\Omega, H^2(\R, [0, 1])} \left\{ \min_{v \in C^2 \left( [0,1], \R_{\geq 0} \right), \ \gamma > 0 } \ \sum_k \lambda_k \L_k \right\}
\end{equation}
for $\lambda_k \geq 0$ and where the costs $\L_k$ are given in the sequel.

\subsection{Interpretation of the different costs}

\subsubsection{Data-based costs}\label{sec:databased}

The first kind of cost function is typical in regression problems in machine learning~\cite{Goodfellow-et-al-2016}.
It consists of the mean square error (MSE) between the measurements and the estimated function.
Given the three types of measurements described above, we arrive at the following cost functions:
\begin{enumerate}
    \item We measure the position of each PV at given instants $t_k$ so that the dataset is $\{ y_i(t_k) \}_{i,k}$ and the MSE is
    \[
        \L_1 = \frac{1}{N} \sum_{i=1}^N \frac{1}{\Nmeas} \sum_{k=1}^{\Nmeas} \Bracket{ y_i(t_k) - \hat{y}_i(t_k) }^2.
    \]
    Here, $\hat{y}_i : [0, T] \to \R$ is the estimated position of the $i$-th PV. 
    
    \item The second data-based cost function is related to the density measurements $\{ \rho_i(t_k) \}_{i,k}$:
    \[
        \hspace*{-0.5cm}\L_2 = \frac{1}{N} \sum_{i=1}^N \frac{1}{\Nmeas} \sum_{k=1}^{\Nmeas} \Bracket{ \Parens{\rho_i(t_k) - n_{\rho_i}} - \hat{\rho}(t_k, y_i(t_k)) }^2\!\!\!,
    \]
    where $\hat{\rho} : [0, T] \times \R \to [0, 1]$ is the estimated density and $n_{\rho_i}$ is a variable introduced to suppress the bias in the noise (as discussed in \cite{barreau2020learningbased}).
    
    \item Finally, including the velocity measurements enables the identification of the model~\eqref{eq:coupled2} by estimating the velocity function:
    \[
        \L_3 = \frac{1}{N} \sum_{i=1}^N \frac{1}{\Nmeas} \sum_{k=1}^{\Nmeas} \Bracket{ v_i(t_k) - \hat{v}( \rho_i(t_k) ) }^2,
    \]
    where $\hat{v} : [0, 1] \to \R_{\geq 0}$ is the estimated velocity.
\end{enumerate}
For $n_{\rho_i} = 0$, these cost functions are based on the measurement data only, so they provide quantitative information which can be used to reject \emph{unbiased} noise. 

\begin{remark}
    There is no need to remove the biases on the trajectories since they are defined up to a constant.
    There is also no correction on the velocity measurements since they are assumed to be noiseless.
\end{remark}

However, this type of cost function has some drawbacks.
In the presence of biased noise, the previous cost functions do not perform well.
Secondly, we must have as many density measurements as position and speed measurements.
One solution to these problems is to introduce two other cost functions:
\begin{enumerate}
    \item The first corrects for errors in the trajectory measurements:
    \[
        \hspace*{-0.75cm}
        \L_4 = \frac{1}{N} \sum_{i=1}^N \frac{1}{\Nmeas} \sum_{k=1}^{\Nmeas} \Bracket{ \Parens{\rho_i(t_k) - n_{\rho_i}} - \hat{\rho}(t_k, \hat{y}_i(t_k)) }^2.
    \]
    \item The second cost function is related to velocity measurements:
    \[
        \hspace*{-0.75cm}
        \L_5 =\! \frac{1}{N} \sum_{i=1}^N \frac{1}{\Nmeas} \! \sum_{k=1}^{\Nmeas} \Bracket{ v_i(t_k) - \hat{v}{\Parens{ \hat{\rho}_i(t_k, \hat{y}_i(t_k)) }}  }^2\!\!\!.
    \]
\end{enumerate}

Finally, the most important deficiency when using only data-based costs is the necessity of a large dataset. More specifically, the measurements should be well spread across the set $[0, T] \times [y_1, y_N]$.
This is not the case in our problem since the measurements are taken along the trajectories of the PVs.
To ensure a small generalization error we need to consider additional cost functions.


\subsubsection{Physics costs}

These cost functions do not depend on the measurements, but only on the reconstructed functions.
Consequently, they can be seen as regularizers.

\begin{enumerate}
    \item Dynamics of the traffic density: given $\Nphy^\rho$ sample points $\Set{(t^{\rho}_k, x^{\rho}_k)}_{k = 1}^{\Nphy^\rho}$,
        \begin{equation*}
            \L_6 = \frac{1}{\Nphy^{\rho}}\sum_{k = 1}^{\Nphy^\rho}{
            \Parens{
                \N_\rho{\Bracket{\hat{\rho}, \hat{v}}}\Big\rvert_{(t, x) = (t^{\rho}_k, x^{\rho}_k)}
            }^2
            }.
        \end{equation*}
        In addition, the dissipation coefficient $\gamma$ that is introduced to smooth $\hat{\rho}$ should be as small as possible,
        leading to the introduction of another cost:
        \begin{equation*}
            \L_7 = \gamma^2.
        \end{equation*}
    \item Dynamics of the PV trajectories: given $\Nphy^y$ sample points $\Set{t^{y}_k}_{k = 1}^{\Nphy^y}$,
        \begin{equation*}
            \L_8 = \frac{1}{N}\sum_{i = 1}^N \frac{1}{\Nphy^y} \sum_{k = 1}^{\Nphy^y}{
            \Parens{
                \N_y{\Bracket{\hat{y}_i}}\Big\rvert_{t = t^{y}_k}
            }^2
            }.
        \end{equation*}
    \item Constraint on the concavity of $f$: given $\Nphy^v$ sample points $\Set{\rho^{v}_k}_{k = 1}^{\Nphy^v}$,
        \begin{equation*}
            \L_{9} = \frac{1}{\Nphy^v}\sum_{i=1}^{\Nphy^v}{
                \max{\Parens{0,
                    \N_v{\Bracket{\hat{v}}}\Big\rvert_{\rho = \rho_k^v}
                }}^2
            }.
        \end{equation*}
\end{enumerate}

These three costs are based on the assumption that the real dynamics follow equation~\eqref{eq:coupled2}.
The advantages are the same as in any model-based framework: they can effectively reduce the noise.

\subsubsection{Total cost}

The total cost is a weighted sum of the ten previously introduced loss functions weighted by the $\lambda_k$.
If $\lambda_2, \lambda_3, \lambda_6$ and $\lambda_8$ are all strictly positive, the penalty method \cite{bertsekas2014constrained} states that solving a sequence of problems with these weight variables increasing to infinity leads to a solution of the discretization of problem~\eqref{eq:stateReconstruction}.

The remaining cost functions are used to enhance the quality of the reconstruction by restricting the set of possible solutions and rejecting the noise.
Indeed, the trajectory reconstructions suppress unbiased noise on the locations of the local density measurements.
The costs related to the velocity are used for identification.
It appears that using a subset of the velocity measurements is enough since the identification is also supported by $\L_5, \L_6, \L_8$ and $\L_{9}$. 

\begin{remark}
    The physics-based cost functions we considered above are based only on the dynamics~\eqref{eq:coupled2} and do not depend on the measurement data at all.
    We could additionally consider `mixed' data- and model-based cost functions.
    For instance, relating the density measurements to the vehicle dynamics one can consider the cost function
    \begin{equation*}
       \L_{\text{mixed}} = \frac{1}{N} \sum_{i=1}^N \frac{1}{\Nmeas} \sum_{k=1}^{\Nmeas} \Bracket{
           \hat{y}_i'(t_k) - \hat{v}{\left( \rho_i(t_k) - n_{\rho_i} \right)}
       }^2,
    \end{equation*}
    allowing for simultaneous noise rejection and identification.
    This type of cost is referred to as a \emph{first-order} physics cost, while those considered above are called \emph{second-order} physics costs.
    Note that unlike the second-order physics costs, this type of cost cannot correct for the problem of sparse measurements, since it depends on the data.
    
    We observed that this type of cost is not necessary to obtain good performance in the numerical examples we consider here, hence in this paper the total cost function $\L$ contains only second-order physics costs.
\end{remark}


\subsection{Neural network approximation of a density reconstruction}

Let $\Theta_{\theta}$ be a general neural network, where $\theta$ is a tensor containing the parameters of the network.
We propose to approximate a function $\hat{\rho}$ satisfying \eqref{eq:langrangeMultiplier} by $\Theta_\theta$.
It has been proved in \cite{sirignano2018dgm} and \cite[extended version]{barreau2020learningbased}
that $\hat{\rho}$ can be approximated arbitrarily well by $\Theta_{\theta}$ provided that the number of neurons is large enough.
The main issue becomes solving \eqref{eq:langrangeMultiplier} for $\hat{\rho} = \Theta_{\theta}$.

\subsubsection{Training procedure}

The optimization problem \eqref{eq:langrangeMultiplier} is particularly difficult to solve since it involves many `adversarial' costs. In that scenario, standard optimization algorithms might not minimize the costs that are slowly varying with respect to $\theta$.
To the best of the authors' knowledge, the article~\cite{wang2020understanding} is the first to investigate this problem by adapting the weights $\{ \lambda_k \}$ during the training.
However, the weight update algorithm is sensitive to noise and based on empirical considerations.
Based on the same idea, in \cite{barreauGeneral}, the authors propose a set of algorithms to deal with this issue
based on algorithms for constrained optimization.

\begin{figure}
    \centering
    \includegraphics[width=0.45\textwidth]{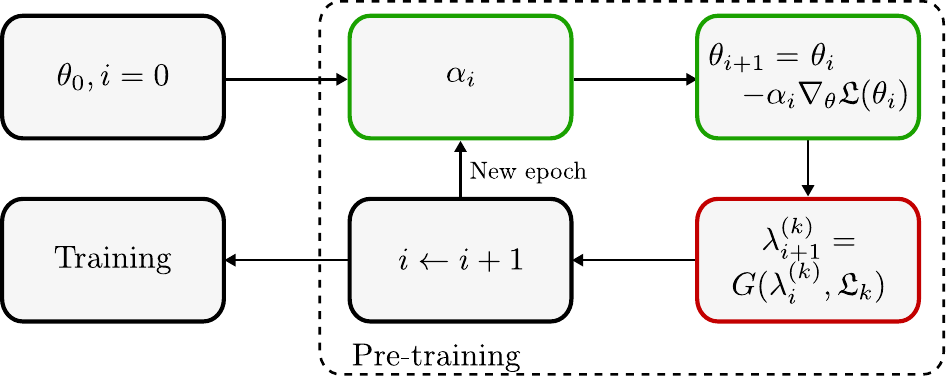}
    \caption{Flow diagram of the update process. The boxes in green refer to traditional 1st order optimization scheme. The red box refers to the the hyper-parameters $\lambda_k$ update. The function $G$ is defined in \cite{barreauGeneral}.}
    \label{fig:update}
    \vspace*{-0.50cm}
\end{figure}

The overall training procedure is summarized in Figure~\ref{fig:update} and is divided into two parts:
\begin{itemize}
    \item Pre-training step: we use a first order algorithm (such as stochastic gradient descent or ADAM) combined with a suitable algorithm for updating the $\lambda_i$ for a fixed number of epochs.
    \item Training step: 
    with the $\lambda_i$ now fixed at appropriate values, a second-order optimizer (e.g., BFGS) is used to ensure convergence to a global optimum.
\end{itemize}
More details about the weight update algorithm can be found in Appendix~\ref{appendix:training}.

The main disadvantage of the proposed training procedure is that there is no guarantee on the convergence to a global minimum of \eqref{eq:langrangeMultiplier}, as well as no upper bound on the GE.
Only convergence to a local minimum in parameter space is theoretically guaranteed
(as is typically the case when using deep neural networks, since the resulting optimization problem is usually non-convex).

\subsubsection{Network architecture}

As discussed in~\cite{barreau2020learningbased}, fully connected deep neural networks with hyperbolic tangent activation functions provide an effective and simple architecture for this problem.
%

The network used to approximate the density has 5 hidden layers with 10 neurons each.
In order to ensure that the network output remains bounded, a $\tanh$ function is applied after the final layer.
%
For the trajectory reconstruction, we use a small neural network with 3 hidden layers and 5 neurons per layer for each probe vehicle.

The speed function $v$ is also approximated by a neural network.
Since this function should not be too complex (otherwise it will be very difficult to solve the PDE),
we use a very simple multi-layer neural network with 2 hidden layers and 5 neurons per layer.
We enforce ${\hat{v}(1) = 0}$ by multiplying the network output by ${1 - \rho}$.

\section{Simulation Results and Discussion}

In this section, we first compare simulation results and then investigate the performance of the training procedure using a statistical analysis\footnote{The code and data are available at \url{https://github.com/mBarreau/TrafficReconstructionIdentification}}.

\subsection{SUMO simulation}

Figure~\ref{fig:rho} shows the traffic density obtained from a SUMO simulation (see Appendix~\ref{appendix:SUMO} for more details).
One can clearly see stop and go waves which originate from a traffic light located at $x = 2.5$~km.

The neural network reconstruction gives the estimated density in Figure~\ref{fig:rhoHat}.
A low number of physics points is used ($\Nphy^\rho = 500$) as SUMO uses second order follow-the-leader dynamics and approximating it using a first order hyperbolic equation leads to discrepancies.
With low values of $\Nphy^{\rho}$, the model is enforced only when data is not available, thus ensuring a relatively good fit \cite{barreauGeneral}.
The characteristics are well-identified and the generalization error is small ($\approx 0.3$),
although the result is smoother than the original density.

\begin{figure}[htbp]
    \centering
    \subfloat[Normalized density from a SUMO simulation. Black lines represent the PV trajectories.]{\includegraphics[width=.45\textwidth]{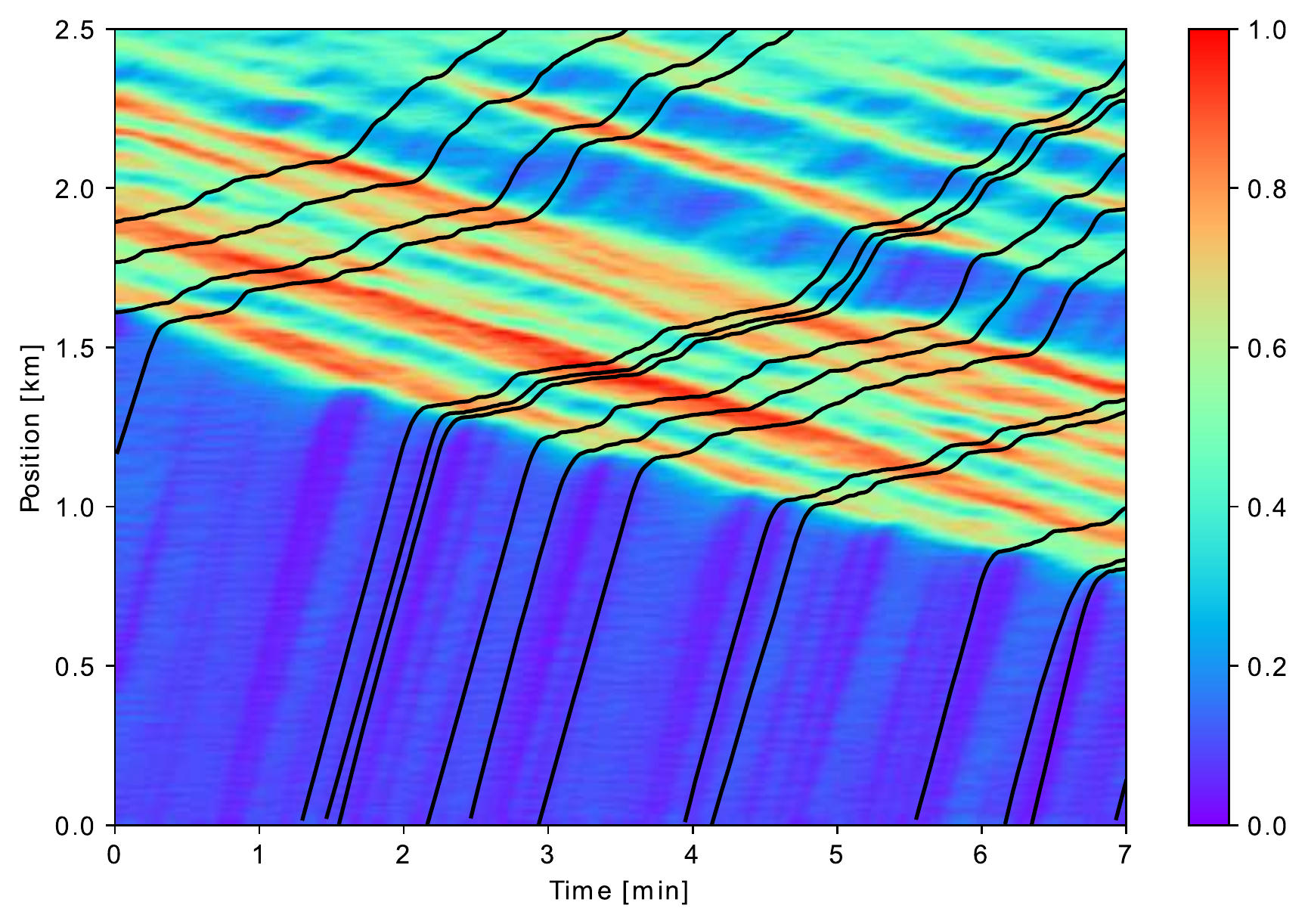}\label{fig:rho}} 
    \vspace*{-0.3cm}\\
	\subfloat[Reconstructed density using a neural network. Brown lines represent the estimated PV trajectories.]{
	\includegraphics[width=.45\textwidth]{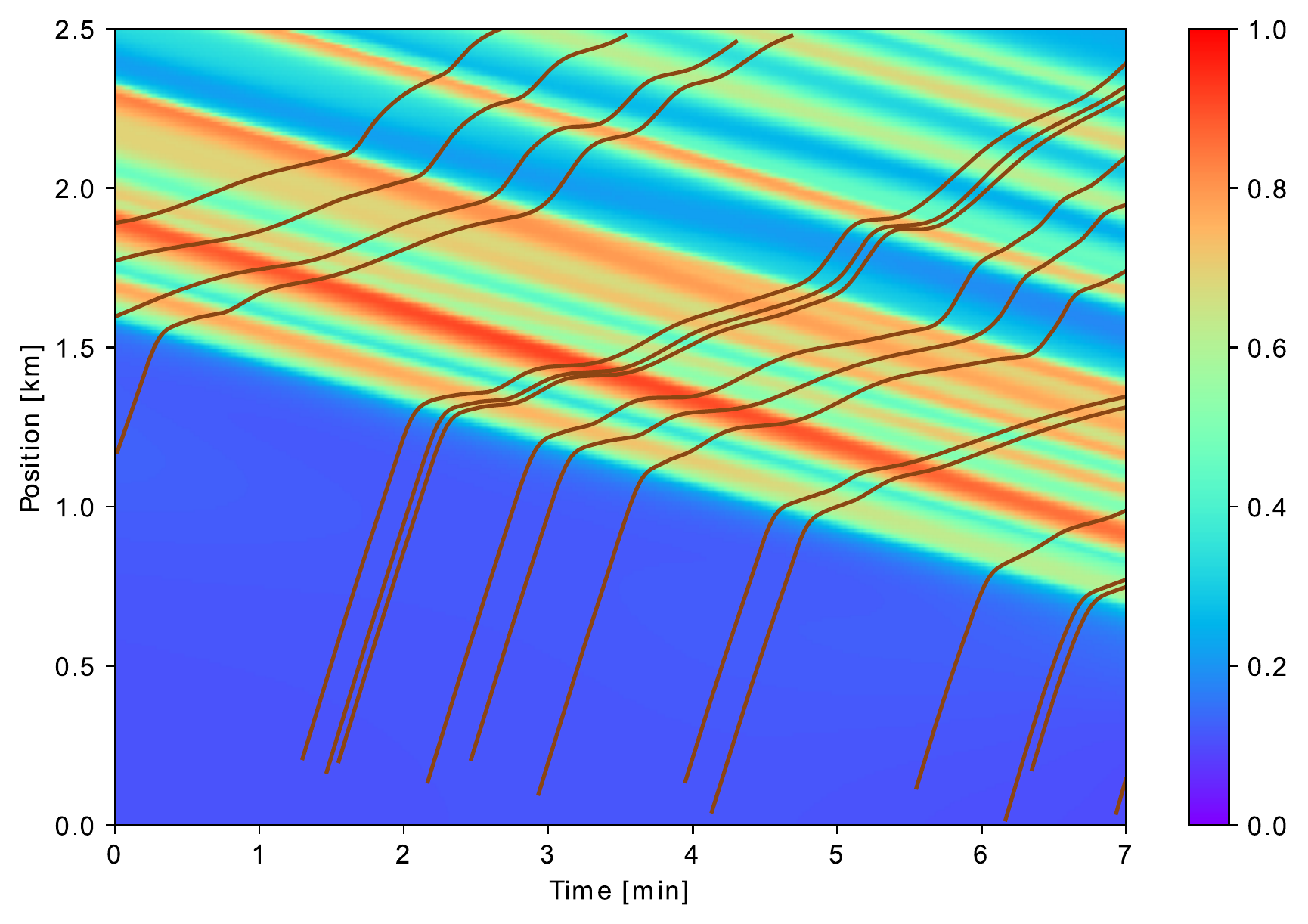}\label{fig:rhoHat}}
	\caption{Density reconstruction using data from a SUMO simulation. }
    \vspace*{-0.50cm}
\end{figure}

\subsection{Training performance}\label{subsec:training}


We investigate here the computation time and error for three simulation cases:
\begin{enumerate}
    \item density measurements computed using a Godunov solver of~\eqref{eq:hclaw} with the Greenshields flux function~\eqref{eq:greenshields};
    \item density measurements computed using a Godunov solver of~\eqref{eq:hclaw} with the Newell-Daganzo flux function~\eqref{eq:newell-daganzo};
    \item density measurements from the SUMO simulation described above.
\end{enumerate}
We compute the density reconstruction with and without the pre-training step described in~\ref{subsec:training}.
The distribution of the computation time and the generalization error~\eqref{eq:GE} for 50~simulation runs of each case is shown in Figure~\ref{fig:simulation_stats}.
The simulations were run on a laptop with an Intel(R) Core(TM) i5-8365U CPU~@~1.60~GHz with four processor cores.

In the first two cases, we observe that using the pre-training leads to slightly higher computational time but the mean and variance of the generalization error decrease significantly.
This implies that the pre-training step improves the solution quality and makes the training procedure more robust with respect to the parameter initialization.

In the third case, the performance is worsened by the pre-training step, due to the fact that this step attempts to enforce the dynamics~\eqref{eq:coupled2} more strictly.
As mentioned above, the first-order dynamics we considered here do not match those used by the SUMO simulator, so we do not observe the same performance improvement as in the previous two cases where the model and data where generated by the same dynamics.

\begin{figure}[htbp]
    \centering
    \includegraphics[width=0.49\textwidth]{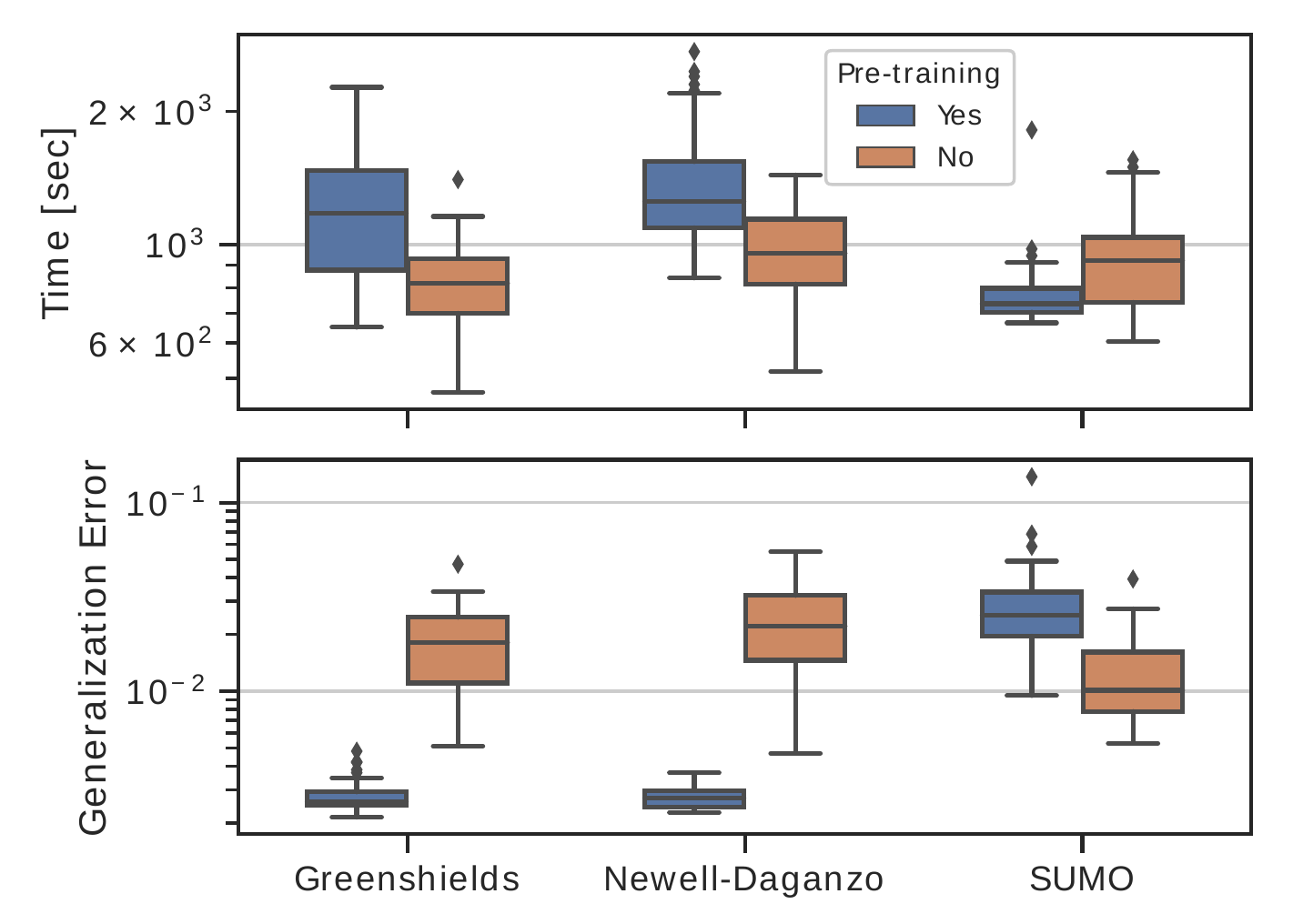}
    \caption{Distribution of the computation time and GE for the three simulation cases described in~\ref{subsec:training}}
    \label{fig:simulation_stats}
    \vspace*{-0.20cm}
\end{figure}


\section{Conclusion \& Perspectives}

In this article, we have investigated the use of physics-informed learning for identification and state reconstruction of traffic flow.
The results show a good reconstruction quality with a moderate computational burden.
The proposed training procedure helps in dealing with many physics costs and ensures a faster and more robust convergence.
However, there are some discrepancies in the reconstruction using the data from SUMO, which are due to the model mismatch.
A second order scheme with speed dynamics should be investigated.
The scalability of the method to a large number of PVs is also to be studied, as the method requires one small neural network per PV.
In addition, the extension to predictions in the near future should be considered,
and comparisons with other methods~\cite{seo2017traffic,delle2019traffic,cicic2020numerical} have to be conducted.

\appendix

\subsection{The training procedure}
\label{appendix:training}

A complete study of training procedures useful in physics-informed learning is done in \cite{barreauGeneral}.
Here we give a brief summary of the techniques used in this article.

With a gradient-descent based optimization algorithm, the weights are updated according to
\[
    \theta_{i+1} = \theta_i - \alpha_i \sum_{k=1}^{9} \lambda_i^{(k)} \nabla_{\theta} \L_k(\theta_i),
\]
where $\alpha_i$ is the learning rate.

The main idea (developed initially in \cite{fioretto2020lagrangian,nandwani2019primal}) is to perform a dual update on the $\lambda_i$ during training:
\[
    \lambda_{i+1}^{(k)} = \lambda_i^{(k)} - \alpha_{\lambda} \nabla_{\lambda^{(k)}} \L_k(\theta_i).
\]

The update of a weight $\lambda$ depends on the nature of the associated cost function.
Data-based costs (as in part~\ref{sec:databased}) have the same importance throughout the entire training and can provide a good estimate in the initial training steps, so the corresponding weight should be positive and constant.
Physics-based costs, meanwhile, represent constraints which can be of two types:
\begin{enumerate}
    \item a soft constraint: the cost cannot be reduced to zero (finite-size neural network, noisy measurements...).
    Consequently, this term must be weighted such that its contribution relative to the data-based costs at the optimum is given by the user.
    This prior knowledge reflects the feasibility and reliability of the constraint.

    \item a hard constraint: the cost must be reduced to (approximately) zero.
    These constraints are typically related to some physical law underlying the data that must hold.
    After training, the cost value must be as small as possible, regardless of the values of the other costs.
\end{enumerate}
These two kinds of costs require different weight update procedures.
For a soft constraint, the weight is upper bounded and should approach the desired final value given by the user.
On the contrary, for a hard constraint, the weight is not upper bounded and the growth rate depends on the value of the corresponding cost.

\subsection{The SUMO simulation}
\label{appendix:SUMO}

The SUMO simulation is relatively simple and the full code is available on \texttt{\href{https://github.com/mBarreau/TrafficReconstructionIdentification}{GitHub}}. The road is of length $L = 3$~km with a traffic light at $x = 2.5$~km.
Since the traffic light is not represented in the model, we propose here to reconstruct the traffic for $x \in [0, 2.5]$~km.
The car dynamics are described in \cite{behrisch2011}.
The probability of a car being a probe vehicle is set to $0.1$.

The output of the simulation is the trajectories of all simulated vehicles.
From this dataset, we reconstruct the number of cars in a given cell on a grid.
The density is then obtained through convolution with a Gaussian filter~\cite{scheepens2011composite}.
The parameters of the Gaussian kernel depend on the grid and the speed of a vehicle in a given context.
Here we assume the parameters to be the same, independently of the traffic state: $\sigma_{\text{space}} = 0.01$~km and $\sigma_{\text{time}} = 0.06$~min.

\bibliographystyle{IEEEtran}
\bibliography{biblio}

\end{document}